\definecolor {processblue}{cmyk}{0.96,0,0,0}
\newtheorem{Theorem}{Theorem}[section]
\newtheorem{Lemma}{Lemma}[section]
\newcommand*{\rom}[1]{\expandafter\@slowromancap\romannumeral #1@}
\begin{document}
\title{xxxx}
\date{}
 \title{Arithmetic on Moran sets}
\author{Xiaomin Ren, Li Tian, Jiali Zhu and Kan Jiang\thanks{Kan Jiang is the corresponding author}}
\maketitle{}
\begin{abstract}
Let $(\mathcal{M}, c_k,n_k)$ be a class of Moran sets.   We assume that the convex hull of any $E\in (\mathcal{M}, c_k,n_k)$ is  $[0,1]$.
Let $A,B$ be two non-empty sets in $\mathbb{R}$.  
Suppose that $f$
is a continuous function defined on an open set $U\subset \mathbb{R}^{2}$.
Denote the  continuous image  of $f$  by 
\begin{equation*}
f_{U}(A,B)=\{f(x,y):(x,y)\in (A\times B)\cap U\}.
\end{equation*}
In this paper, we prove the following result.
Let $E_1,E_2\in(\mathcal{M}, c_k, n_k)$. If there exists some $(x_0,y_0)\in (E_1\times E_2)\cap U$ such that 
$$\sup_{k\geq 1}\left\{1-c_kn_k\right\}<\left\vert \frac{\partial
_{y}f|_{(x_{0},y_{0})}}{\partial _{x}f|_{(x_{0},y_{0})}}\right\vert <\inf_{k\geq 1}\left\{\dfrac{c_k}{1-n_kc_k}\right\}.$$
Then $f_U(E_1, E_2)$ contains an interior. 
\end{abstract}
\section{Introduction}

Given two non-empty sets $A,B\subset\mathbb{R}$. Define $A*B=\{x*y:x\in A, y\in B\}$, where $*$ is  $+, -,\times$ or $\div$ (when $*=\div$, $y\neq 0$). We call $A*B$ the arithmetic on $A$ and $B$.  Generally, we may define  the arithmetic on $A$ and $B$ in terms of some functions.
Suppose that $f$
is a continuous function defined on an open set $U\subset \mathbb{R}^{2}$.
Denote the  continuous image  of $f$  by 
\begin{equation*}
f_{U}(A,B)=\{f(x,y):(x,y)\in (A\times B)\cap U\}.
\end{equation*}
For simplicity, we still call $f_{U}(A,B)$ the  arithmetic on $A$ and $B$. 
Arithmetic on the fractal sets has  strong connections with many different problems in geometry measure theory and dynamical systems \cite{XiH, WX}. For instance, in geometry measure theory, the visible problem is related to  the division on the fractals \cite{FF,JJ1,Neil}. The main reason is due to the following observation.   Let $K\subset [0,1]$ be a fractal set. 
 Given $\alpha \geq  0$, we say the line $y=\alpha x$ is visible through $K\times K$ if
$$
\{(x, \alpha x): x\in \mathbb R\setminus \{0\}\}\cap (K\times K)=\emptyset.
$$
It is easy to verify  that the line $y=\alpha x$ is visible
through $K\times K$ if  and only if
$$\alpha\notin \frac{K}{
K}:=\left\{\dfrac{x}{y}:x,y\in K, y\neq 0\right\}.$$
The arithmetic sum  of two Cantor sets was studied by many scholars. There are many results concerning with this topic, see \cite {Yoccoz,Dekking1, Eroglu,  Hochman2012, SumKan, Hall,PS}  and references therein.  It is an important problem in  homoclinic bifurcations \cite{Palis}. Palis \cite{Palis} posed the following problem: whether it is true (at least generically) that the arithmetic  sum of dynamically defined Cantor sets either has measure zero or contains an interval. This conjecture was solved in \cite{Yoccoz}.  Motivated by Palis' conjecture, it is natural to investigate when the sum of two Cantor sets contains some interiors. Newhouse \cite{SN} proved the following thickness theorem. Given any two Cantor sets $C_1$ and $C_2$,   if $\tau(C_1)\tau(C_2)>1$, where $\tau(C_i),i=1,2$ denotes the thickness of $C_i, i=1,2,$ then $C_1+C_2$ contains some interiors.  
However, Newhouse thickness theorem  cannot handle  a general function $f$, i.e. whether $f(C_1,C_2)$ contains an interior or not. 

To date, there are not so many results concerning with the arithmetic on the fractal sets \cite{Tyson,HS,Yuki}.  The first result of this direction, to the best of our knowledge, is due to Steinhaus \cite{HS} who proved the following interesting result:
$C-C=[-1,1]$, where $C$ is the middle-third Cantor set.  Equivalently, Steinhaus proved that for any $x\in[-1,1]$, there are some $x_1,x_2\in C$ such that $x=x_1-x_2.$ Recently, Athreya,  Reznick and Tyson  \cite{Tyson} considered the multiplication on the middle-third Cantor set.   They proved that $17/21\leq \mathcal{L}(C\cdot C)\leq 8/9$, where $\mathcal{L}$ denotes the Lebesgue measure. Jiang and Xi  \cite{XiKan3} proved that  $C\cdot C$ indeed contains infinitely  many intervals. 
 In \cite{XiKan2}, Jiang and Xi  considered the representations of  real numbers  in $ C-C=[-1,1]$, i.e. let $x\in [-1,1]$, define 
\begin{equation*}
S_{x}=\left\{ \mathbf{(}y_{1},y_{2}\mathbf{)}:y_{1}-y_{2}=x,\,\, (y_1,y_2)\in C\times C\right\} .
\end{equation*}
and 
\begin{equation*}
U_{r}=\{x:\sharp(S_{x})=r\},  r\in \mathbb{N}^{+}. 
\end{equation*}
They proved that $\dim_{H}(U_r)=\dfrac{\log 2}{\log 3}$ if  $r=2^k$ for some $k\in \mathbb{N}$. Moreover, $$0<\mathcal{H}^{s}(U_1)<\infty, \mathcal{H}^{s}(U_{ 2^k})=\infty,k\in \mathbb{N}^{+},$$ 
where $s=\dfrac{\log 2}{\log 3}$. 
$U_{3\cdot 2^k}$ is an infinitely countable set for any $k\geq1$, where $\dim_{H}$ and $\mathcal{H}^{s}$ denote the Hausdorff dimension and Hausdorff measure, respectively. For more results, see \cite{XiKan2}.  In \cite{XiKan1}, Tian et al. defined a class of overlapping self-similar sets as
follows: let $K$ be the attractor of the IFS
\begin{equation*}
\{f_{1}(x)=\lambda x,f_{2}(x)=\lambda x+c-\lambda ,f_{3}(x)=\lambda
x+1-\lambda \},
\end{equation*}
where $f_{1}(I)\cap f_{2}(I)\neq \emptyset ,(f_{1}(I)\cup f_{2}(I))\cap
f_{3}(I)=\emptyset ,$ and $I=[0,1]$ is the convex hull of $K$. This class of self-similar set was investigated by many scholars, see \cite{FengLau, Hochman, XiA,  XiB, LTT11, XiG,XiH}. 
Tian et al.  $K\cdot
K=[0,1]$ if and only if $(1-\lambda )^{2}\leq c$. Equivalently, they gave a
necessary and sufficient condition such that for any $x\in \lbrack 0,1]$
there exist some $y,z\in K$ such that $x=yz$.
Moreover, Ren, Zhu, Tian and Jiang \cite{RZTJ} proved that 
$$\sqrt{K}+\sqrt{K}=[0,2]$$ if and only if 
$$\sqrt{c}+1\geq 2\sqrt{1-\lambda},$$
where $\sqrt{K}+\sqrt{K}=\{\sqrt{x}+\sqrt{y}:x,y\in K\}$.
If $c\geq (1-\lambda)^2$, then $$\dfrac{K}{K}=\left\{\dfrac{x}{y}:x,y\in K, y\neq 0\right\}=\left[0,\infty\right).$$ As a consequence, they  proved that the  following conditions are equivalent:
\begin{itemize}
\item [(1)] For any $u\in [0,1]$, there are some $x,y\in K$ such that $u=x\cdot y;$
\item [(2)] For any $u\in [0,1]$, there are some $x_1,x_2,x_3,x_4,x_5,x_6,x_7,x_8,x_9,x_{10}\in K$ such that $$u=x_1+x_2=x_3-x_4=x_5\cdot x_6=x_7\div x_8=\sqrt{x_9}+\sqrt{x_{10}};$$
\item [(3)] $c\geq (1-\lambda)^2$.
\end{itemize}
In this paper, we shall consider similar problems on  the Moran sets. The Moran sets are, in certain sense, random. Nevertheless, any self-similar set with the open set condition is a Moran set \cite{Hutchinson}. Now we give the definition of  a class of Moran set. 
Let $\{n_k\}\subset \mathbb{N}^{+}$ be a sequence(we assmue that $n_k\geq 2$). For any $k\in \mathbb{N}^{+}$, write $$D_k=\{(\sigma_1,\cdots, \sigma_k): \sigma_j\in \mathbb{N}^{+}, 1\leq \sigma_j\leq n_j,1\leq j\leq k\}.$$
Define $$D=\cup_{k\geq 0}D_k.$$
We call $\sigma \in D$ a word. For simplicity, we let $D_{0}=\emptyset.$ If $\sigma=(\sigma_1,\cdots, \sigma_k)\in D_k$, $\tau=(\tau_1,\cdots, \tau_m)\in D_m$,  then we  define the concatenation  $\sigma*\tau=(\sigma_1,\cdots, \sigma_k,\tau_1,\cdots, \tau_m)\in D_{k+m}$. 
Let $T=[0,1]$ and   $\{c_k\}$ be a positive real sequence with $c_kn_k<1, k\in \mathbb{N}^{+}$, we say the class $$\mathcal{F}=\{T_{\sigma}\subset T:\sigma\in D\}$$ has the Moran structure if the following conditions are satisfied:
\begin{itemize}
\item [(1)] for any $\sigma\in D$, $T_{\sigma}$ is similar to $T$, i.e. there exists a similitude $S_{\sigma}: \mathbb{R}\to \mathbb{R}$ such that $S_{\sigma}(T)=T_{\sigma};$ 
\item [(2)] for any $k\geq 0$ and $\sigma\in D_k$, $T_{\sigma*1}, T_{\sigma*2},\cdots,T_{\sigma*n_{k+1}}$ is a subset of $T_{\sigma}$ and $$int (T_{\sigma*i})\cap int (T_{\sigma*j})=\emptyset, i\neq j,$$ where $int (A)$ denotes the interior of $A$, for simplicity, we denote by $\widetilde{T_{\sigma}}=\cup_{i=1}^{n_{k+1}}T_{\sigma*i}$;
\item [(3)] for any $k\geq 1$ and $\sigma\in D_{k-1}$, 
$\dfrac{|T_{\sigma*i}|}{|T_{\sigma}|}=c_k$, and the convex hull of $T_{\sigma*i}$ and $T_{\sigma}$ coincide for any $1\leq i\leq n_k$,  where $|A|$ denotes the diameter of $A$. 
\end{itemize}
Suppose  $\mathcal{F}=\{T_{\sigma}\subset T:\sigma\in D\}$  has the Moran structure, then we call 
$$E=\cap_{k\geq 1}^{\infty}\cup_{\sigma\in D_k} T_{\sigma}$$ a Moran set. We denote by $(\mathcal{M}, c_k,n_k)$ all the Moran sets generated by the Moran structure $\mathcal{F}.$
By the third  condition, it is easy to see that the convex hull of any $E$ from $(\mathcal{M}, c_k,n_k)$ is $[0,1].$

Now we are ready to state the main result of this paper. 
\begin{Theorem}\label{Main}
Let $E_1,E_2\in(\mathcal{M}, c_k, n_k)$. If there exists some $(x_0,y_0)\in (E_1\times E_2)\cap U$ such that 
$$\sup_{k}\left\{1-c_kn_k\right\}<\left\vert \frac{\partial
_{y}f|_{(x_{0},y_{0})}}{\partial _{x}f|_{(x_{0},y_{0})}}\right\vert <\inf_{k}\left\{\dfrac{c_k}{1-n_kc_k}\right\}.$$
Then $f_U(E_1, E_2)$ contains an interior. 
\end{Theorem}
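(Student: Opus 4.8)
The plan is to reduce the problem to a purely one-dimensional statement about intersections of $E_1$ (or a rescaled copy of $E_2$) with translated copies of the other set, by linearising $f$ near $(x_0,y_0)$ and exploiting the self-similar combinatorics of the Moran construction. First I would fix the point $(x_0,y_0)$ and write $\lambda = -\partial_y f|_{(x_0,y_0)}/\partial_x f|_{(x_0,y_0)}$, so that the level curve of $f$ through $(x_0,y_0)$ has slope $\lambda$ at that point; by the implicit function theorem, on a small neighbourhood the level sets of $f$ are graphs $y = \varphi_t(x)$ with $\varphi_t'(x)$ close to $\lambda$. Thus it suffices to show that, for every $t$ in a small interval, the curve $y=\varphi_t(x)$ meets $E_1\times E_2$; equivalently, that $\varphi_t(E_1)\cap E_2\neq\emptyset$ for all such $t$. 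Since $\varphi_t$ is essentially an affine map of slope $\approx\lambda$ plus a small error, the core of the argument is: if $|\lambda|$ lies strictly between $\sup_k(1-c_kn_k)$ and $\inf_k c_k/(1-n_kc_k)$, then for every small translation the image $\lambda E_1 + (\text{const})$ still intersects $E_2$.

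Next I would set up the combinatorial heart of the argument, which is a nested-interval / thickness-type scheme adapted to the non-homogeneous Moran data. The two inequalities encode exactly the two things one needs at each level $k$: on the one hand, the gaps of the level-$k$ Moran covering have relative size $1 - c_k n_k$ (the total length removed at that step), so $|\lambda| > \sup_k(1-c_kn_k)$ guarantees that an interval of $E_2$-type length at level $k-1$, when stretched by $|\lambda|$, is long enough to "bridge" any single gap in the level-$k$ decomposition of $E_1$; on the other hand, $|\lambda| < \inf_k c_k/(1-n_kc_k)$ rewrites as $|\lambda|(1-n_kc_k) < c_k$, which controls the stretched image so that it still fits inside the union of the level-$k$ pieces and does not overshoot a whole generation-$(k-1)$ block. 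Carrying out the induction, I would build a decreasing sequence of pairs (a piece $T_\sigma$ of $E_1$ of level $k$ and a piece $T_\tau$ of $E_2$ of level $k$) such that the stretched-and-translated image of $T_\sigma$ always overlaps $T_\tau$; the two displayed inequalities are precisely what make the inductive step go through regardless of the choices $n_k, c_k$. Taking the intersection of this nested sequence produces, for each admissible $t$, a genuine point of $\varphi_t(E_1)\cap E_2$.

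Finally I would assemble these pieces: the set of parameters $t$ for which $y=\varphi_t(x)$ passes through $(E_1\times E_2)\cap U$ contains an interval $J$ around the value $t_0$ corresponding to $(x_0,y_0)$, and since $f$ takes the constant value $t$ on the curve $y=\varphi_t(x)$ while varying continuously transversally, $f_U(E_1,E_2)\supseteq J$, so it contains an interior point. I expect the main obstacle to be the bookkeeping in the inductive step in the presence of the error term coming from the nonlinearity of $\varphi_t$: one must choose the neighbourhood of $(x_0,y_0)$ small enough that $|\varphi_t'|$ stays within a closed subinterval of $(\sup_k(1-c_kn_k),\ \inf_k c_k/(1-n_kc_k))$, and then check that the slack in the strict inequalities absorbs both this $C^1$-error and the accumulated translation error at every level uniformly in $k$. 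A secondary technical point is that the Moran sets $E_1$ and $E_2$ may be different members of $(\mathcal{M},c_k,n_k)$ (different choices of the sub-block positions), so the argument must only use the ratios $c_k$ and the counts $n_k$, not the precise placement of the $T_{\sigma*i}$ inside $T_\sigma$; fortunately the two inequalities were designed to be placement-free, so this causes no real trouble.
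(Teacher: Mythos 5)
Your plan is essentially sound and rests on exactly the same two geometric estimates as the paper, but it packages them quite differently. The paper never linearises: its Lemma 2.2 shows directly, by comparing values of $f$ at corners of adjacent sub-rectangles, that $f(I,J)=f(\widetilde I,\widetilde J)$ for every pair of basic intervals near $(x_0,y_0)$, and its Lemma 2.1 then passes to the limit by nestedness, compactness and continuity to get $f(E_1\cap I,E_2\cap J)=f(I,J)$, which visibly contains an interval since $\partial_xf\neq 0$. Your route -- implicit function theorem, reduction to ``$\lambda E+c$ meets the other Moran set for all small $c$'', and a gap-lemma-style nested induction -- proves the same pointwise statement (every level set near $f(x_0,y_0)$ meets $E_1\times E_2$) and makes the meaning of the two hypotheses more transparent (they are exactly the level-$(k-1)$-interval-bridges-level-$k$-gap and level-$k$-interval-bridges-level-$k$-gap conditions), at the price of the accumulated-error bookkeeping that the paper's set-identity-plus-compactness argument avoids entirely. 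Two points need repair before your version closes. First, the slope of the graph $y=\varphi_t(x)$ is $-\partial_xf/\partial_yf$, not $-\partial_yf/\partial_xf$; as written you also switch between ``$\varphi_t(E_1)\cap E_2$'' and stretching $E_2$ to bridge gaps of $E_1$. The bridging inequalities you state are the correct ones only for the formulation $E_1\cap(\lambda E_2+c)$ with $\lambda=\partial_yf/\partial_xf$, so fix the labels consistently or the hypotheses come out reciprocated. Second, in the induction the invariant ``the image of $T_\sigma$ overlaps $T_\tau$'' is too weak to descend a level (two intervals can overlap while the relevant curve threads only through gaps); you need a crossing or linking invariant, which is precisely what the paper's corner comparisons $f(P_1)\geq f(P_2)$ and $f(P_3)\geq f(P_4)$ encode. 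With those adjustments the argument goes through.
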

The paper is arranged as follows. In section 2, we prove two basic lemmas and give a proof of Theorem \ref{Main}. In section 3, we give some remarks. 
\section{Proof of Theorem \ref{Main}}
In this section, we shall prove Theorem \ref{Main}.  First, we  give some definitions and prove two useful lemmas. 
 
 For any $k\geq 1$, denote  by  $E_k$ the union of basic intervals when we construct a Moran set $E$, i.e. 
 $$E_k=\cup_{\sigma\in D_k}T_{\sigma}, E=\cap_{k=1}^{\infty}E_k,$$ where $T_{\sigma}$ is called a basic interval with rank $k$. 
 It is easy to check that the length of any basic interval with rank $k$ is $c_1c_2\cdots c_k.$
 Let $[A,B]\subset [0,1]$, where $A$ and $B$ are  the  left and right endpoints of  some basic intervals in $E_k$ for some $k\geq 1$, respectively. $A$ and $B$ may not in the same basic interval.  In the following lemma, we choose $A$ and $B$ in this way.  Let $F_k$ be the collection of all the basic intervals in $[A,B]$ with length  $c_1c_2\cdots c_k, k\geq k_0$ for some $k_0\in\mathbb{N}^{+}$, i.e.  the union of all the elements of $F_k$ is denoted by $G_k=\cup_{i=1}^{t_k}I_{k,i}$, where $t_k\in \mathbb{N}^{+}$,  $I_{k,i}\subset  E_k\cap  [A,B]$. Clearly, by the definition of $G_n$, it follows that $G_{n+1}\subset G_n$ for any $n\geq k_0.$
\begin{Lemma}\label{key1}
Let $E_1, E_2\in (\mathcal{M}, c_k, n_k)$, i.e. 
$$E_1=\cap_{k=1}^{\infty}E_k^{(1)}, E_2=\cap_{k=1}^{\infty}E_k^{(2)}.$$
Assume $F:\mathbb{R}^2\to \mathbb{R}$ is a continuous function.  
Suppose $A$ and $B$ ($C$ and $D$) are  the  left and right endpoints of  some basic intervals in $E_{k_0}^{(1)}$($E_{k_0}^{(2)}$) for some $k_0\geq 1$, respectively. 
Then  $E_1\cap [A,B]=\cap_{n={k_0}}^{\infty}G_n^{(1)}, E_2\cap [C,D]=\cap_{n={k_0}}^{\infty}G_n^{(2)}$.
Moreover, if   for any $n\geq k_0$ and any basic intervals $I_1\subset G_n^{(1)}, I_2\subset G_n^{(2)}$,  we have 
$$F(I_1, I_2)=F(\widetilde{I_1}, \widetilde{I_2}),$$
then $F(E_1\cap [A,B],E_2\cap [C,D] )=F(G_{k_0}^{(1)}, G_{k_0}^{(2)}).$ 
\end{Lemma}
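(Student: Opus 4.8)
The plan is to prove the two assertions separately, but both rest on the same elementary observation: intersecting a Moran set with an interval whose endpoints are endpoints of rank-$k_0$ basic intervals simply removes the basic intervals lying outside $[A,B]$, and the remaining construction is governed by the \emph{same} contraction ratios $\{c_k\}$ and branching numbers $\{n_k\}$. First I would verify the identity $E_1\cap[A,B]=\cap_{n\ge k_0}G_n^{(1)}$. Since $A$ and $B$ are endpoints of rank-$k_0$ basic intervals, the set of rank-$n$ basic intervals of $E_1^{(1)}$ that meet the open interval $(A,B)$ is exactly $F_n$ for each $n\ge k_0$; no rank-$n$ interval straddles $A$ or $B$ because the finer intervals are nested inside the rank-$k_0$ ones. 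Hence $E_1\cap[A,B]=\bigl(\cap_{n\ge 1}E_n^{(1)}\bigr)\cap[A,B]=\cap_{n\ge k_0}\bigl(E_n^{(1)}\cap[A,B]\bigr)=\cap_{n\ge k_0}G_n^{(1)}$, using $G_n^{(1)}\supset G_{n+1}^{(1)}$ for the last step, and symmetrically for $E_2\cap[C,D]$.

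Next I would establish the image identity $F(E_1\cap[A,B],E_2\cap[C,D])=F(G_{k_0}^{(1)},G_{k_0}^{(2)})$ under the hypothesis $F(I_1,I_2)=F(\widetilde{I_1},\widetilde{I_2})$ for every pair of basic intervals $I_1\subset G_n^{(1)}$, $I_2\subset G_n^{(2)}$, $n\ge k_0$. The inclusion $\subseteq$ is immediate since $E_1\cap[A,B]\subseteq G_{k_0}^{(1)}$ and likewise for $E_2$. For $\supseteq$, note that $G_n^{(1)}$ is the union of the $\widetilde{I}$ over rank-$n$ basic intervals $I\subset G_n^{(1)}$, and $F(I_1,I_2)=F(\widetilde{I_1},\widetilde{I_2})$ together with $\widetilde{I_j}\supseteq$ the next-level pieces of $I_j$ lets one push the equality down one level at a time: $F(G_{k_0}^{(1)},G_{k_0}^{(2)})=F(G_{k_0+1}^{(1)},G_{k_0+1}^{(2)})=\cdots=F(G_n^{(1)},G_n^{(2)})$ for all $n$. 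The remaining point is to pass to the limit: one wants $\cap_n F(G_n^{(1)},G_n^{(2)})=F(\cap_n G_n^{(1)},\cap_n G_n^{(2)})$. This is where compactness and continuity of $F$ enter. Given $z$ in every $F(G_n^{(1)},G_n^{(2)})$, pick $(x_n,y_n)\in G_n^{(1)}\times G_n^{(2)}$ with $F(x_n,y_n)=z$; by compactness extract a convergent subsequence $(x_{n_j},y_{n_j})\to(x_*,y_*)$, and since the $G_n$ are nested and closed, $x_*\in\cap_n G_n^{(1)}=E_1\cap[A,B]$ and $y_*\in E_2\cap[C,D]$; continuity gives $F(x_*,y_*)=z$. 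Combined with the previous chain of equalities this yields $F(E_1\cap[A,B],E_2\cap[C,D])=F(G_{k_0}^{(1)},G_{k_0}^{(2)})$.

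The only genuine subtlety—the step I expect to require the most care in writing up—is the single-level descent $F(G_n^{(1)},G_n^{(2)})=F(G_{n+1}^{(1)},G_{n+1}^{(2)})$. One has to be careful that the hypothesis is applied to \emph{pairs} of basic intervals: writing $G_n^{(1)}=\bigcup_i I_{n,i}$ and $G_n^{(2)}=\bigcup_j J_{n,j}$, one gets $F(G_n^{(1)},G_n^{(2)})=\bigcup_{i,j}F(I_{n,i},J_{n,j})=\bigcup_{i,j}F(\widetilde{I_{n,i}},\widetilde{J_{n,j}})$; since $\widetilde{I_{n,i}}$ is precisely the union of the rank-$(n+1)$ basic intervals of $E_1^{(1)}$ inside $I_{n,i}$ (all of which lie in $[A,B]$, hence in $G_{n+1}^{(1)}$), taking the union over all $i,j$ recovers exactly $F(G_{n+1}^{(1)},G_{n+1}^{(2)})$. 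Once this bookkeeping is set up correctly, the rest is the routine nested-compact-sets-plus-continuity argument sketched above, and an easy induction completes the proof.
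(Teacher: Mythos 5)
Your proof is correct and follows essentially the same route as the paper's: establish the nesting $G_{n+1}^{(i)}\subset G_n^{(i)}$ to get $E_1\cap[A,B]=\cap_{n\geq k_0}G_n^{(1)}$, prove the one-step identity $F(G_n^{(1)},G_n^{(2)})=F(G_{n+1}^{(1)},G_{n+1}^{(2)})$ by decomposing into pairs of basic intervals and applying the hypothesis, and pass to the limit via continuity. The only difference is that you explicitly justify $\cap_n F(G_n^{(1)},G_n^{(2)})=F\left(\cap_n G_n^{(1)},\cap_n G_n^{(2)}\right)$ with the compactness/subsequence argument, a step the paper asserts with only the phrase ``in terms of the continuity of $F$.''
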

\begin{proof}
We assume that 
$G_n^{(i)}=\cup_{1\leq j\leq t_n^{(i)}}I_{n,j}, i=1,2.$ 
By the construction of $G_n^{(i)}, i=1,2$,  it is clear that $G_{n+1}^{(i)}\subset G_n^{(i)}$ for any $n\geq 1. $
Therefore, 
$$E_1\cap [A,B]=\cap_{n={k_0}}^{\infty}G_n^{(1)}, E_2\cap [C,D]=\cap_{n={k_0}}^{\infty}G_n^{(2)}.$$
In terms of the  continuity of $F$, we conclude that 
\begin{eqnarray}\label{identity}
F(E_1\cap [A,B],E_2\cap [C,D] )=\cap_{n=k_0}^{\infty}F(G_{n}^{(1)}, G_{n}^{(2)}).
\end{eqnarray}
Therefore, 
\begin{eqnarray*}
F(G_n^{(1)}, G_n^{(2)})&=& \cup_{1\leq i\leq t_n^{(1)}, 1\leq j\leq t_n^{(2)}}F(I_{n,i}, I_{n,j})\\&=
&\cup_{1\leq i\leq t_n^{(1)}, 1\leq j\leq t_n^{(2)}}F(\widetilde{I_{n,i}},\widetilde{I_{n,j}})\\&=&
F(\cup_{1\leq i\leq t_n^{(1)}}\widetilde{I_{n,i}},\cup_{1\leq j\leq t_n^{(2)}}\widetilde{I_{n,j}})\\&=&F(G_{n+1}^{(1)}, G_{n+1}^{(2)}).
\end{eqnarray*}
Consequently,  $F(E_1\cap [A,B],E_2\cap [C,D] )=F(G_{k_0}^{(1)}, G_{k_0}^{(2)})$  follows immediately   from   the identity (\ref{identity}) and $F(G_n^{(1)}, G_n^{(2)})=F(G_{n+1}^{(1)}, G_{n+1}^{(2)})$ for any $n\geq k_0.$ 
\end{proof}
\begin{figure}
  \centering
    \includegraphics[width=450pt]{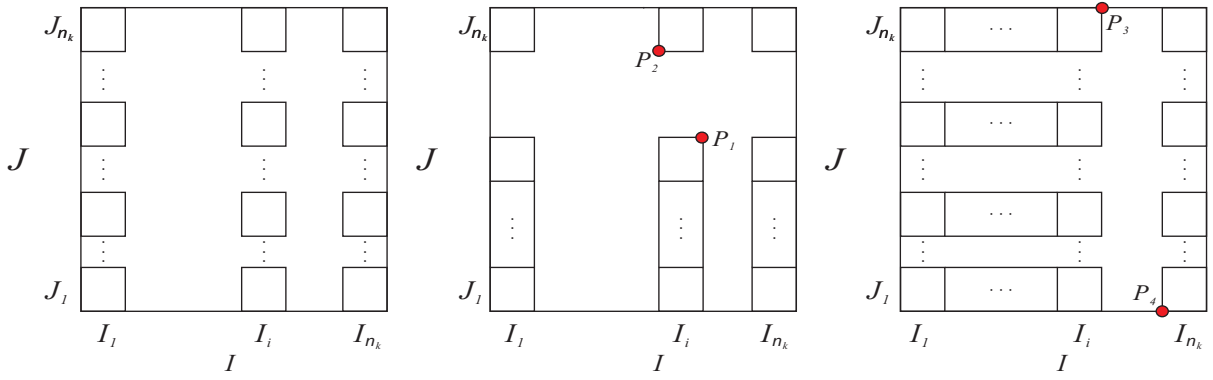}
   \caption{  The figures of $\widetilde{I}\times \widetilde{J}$ }
\end{figure}

\begin{Lemma}\label{key2}
 Let $I=[a,a+t], J=[b,b+t]$ be two basic intervals in  $G_{k-1}^{(1)}$ and $G_{k-1}^{(2)}$, respectively. 
 If there exists some $(x_0,y_0)\in (E_1\times E_2)\cap (I\times J)\cap U$ such that 
$$\sup_{k}\left\{1-c_kn_k\right\}<\left\vert \frac{\partial
_{y}f|_{(x_{0},y_{0})}}{\partial _{x}f|_{(x_{0},y_{0})}}\right\vert <\inf_{k}\left\{\dfrac{c_k}{1-n_kc_k}\right\}.$$
Then $f(I,J)=f(\widetilde{I},\widetilde{J})$. 
\end{Lemma}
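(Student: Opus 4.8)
The plan is to prove the nontrivial inclusion $f(I,J)\subseteq f(\widetilde I,\widetilde J)$, the reverse being immediate from $\widetilde I\subseteq I$, $\widetilde J\subseteq J$. Fix $(x,y)\in(I\times J)\cap U$, set $v=f(x,y)$, and let $L$ be the connected component through $(x,y)$ of $\{f=v\}\cap(I\times J)$; everything reduces to showing $L\cap(\widetilde I\times\widetilde J)\ne\emptyset$. Write $\delta_k=1-n_kc_k>0$, so the hypothesis reads $\sup_{k'}\delta_{k'}<|\partial_yf/\partial_xf|<\inf_{k'}(c_{k'}/\delta_{k'})$ at $(x_0,y_0)$; this ratio is finite and positive, so $\partial_xf,\partial_yf\ne0$ there. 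Since that is an open condition and (in the intended application) $I,J$ are deep basic intervals lying in a small neighbourhood of $(x_0,y_0)$ inside $U$, I will assume $f$ is $C^1$ near $(x_0,y_0)$ and that $\partial_xf\ne0$, $\partial_yf\ne0$ and the same two-sided bound hold throughout $I\times J\subseteq U$ --- this is the one place the single-point hypothesis has to be upgraded, harmless here because $I\times J$ is small. Then $f$ is strictly monotone in each variable on $I\times J$, so $\{f=v\}\cap(I\times J)$ meets every vertical and horizontal line at most once; hence $L$ is the graph of a strictly monotone function, written interchangeably $x=\phi(y)$ on $[y_1,y_2]$ and $y=\psi(x)=\phi^{-1}(x)$ on $[x_1,x_2]$, with $|\phi'|=|\partial_yf/\partial_xf|>\delta_k$ and $|\psi'|=1/|\phi'|>\delta_k/c_k$ along $L$. (If $L$ is a single point it is a corner of $I\times J$, already in $\widetilde I\times\widetilde J$ since the extreme children $I_1,I_{n_k},J_1,J_{n_k}$ are flush with $I,J$; so assume $L$ is a genuine arc with both endpoints on $\partial(I\times J)$, and argue by contradiction that $L\cap(\widetilde I\times\widetilde J)=\emptyset$.)

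The first step is a length bound on the horizontal shadow $[x_1,x_2]$ of $L$. For $\xi\in[x_1,x_2]$ avoidance forces $\xi\notin\widetilde I$ or $\psi(\xi)\notin\widetilde J$; the first type of $\xi$ fills a set of measure $\le\delta_kt$ (the total gap length of $I$ at level $k$), and the second type is carried by $\psi$ into the gaps of $J$ (total length $\le\delta_kt$), hence --- as $|\psi'|>\delta_k/c_k$ --- fills a set of measure $\le c_kt$. So $x_2-x_1\le(\delta_k+c_k)t=(1-(n_k-1)c_k)t<t$, and $L$ does not span $I$ horizontally. Therefore some endpoint $P$ of $L$ has first coordinate strictly inside $(a,a+t)$; being on $\partial(I\times J)$, $P$ lies on the top or bottom edge, so its second coordinate is $b$ or $b+t$, hence in $\widetilde J$. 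Reflecting in $y$ if needed, assume the second coordinate of $P$ equals $b$.

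The second, main, step is to trace $L$ away from $P$. Since $P$'s second coordinate is in $\widetilde J$, avoidance puts its first coordinate in an open gap $g$ of $I$, with $|g|\le\delta_kt$ and $g\subseteq(a,a+t)$. As one moves along $L$ the first coordinate is monotone, alternately crossing gaps and children of $I$. It cannot remain in $g$ for the whole $y$-range: that would force both endpoints of $L$ onto top/bottom edges, so $[y_1,y_2]=[b,b+t]$, and then $|g|\ge\int_b^{b+t}|\phi'|>\delta_kt\ge|g|$, impossible. So the first coordinate enters a child $I_c$ of $I$; at that instant it is in $\widetilde I$, so avoidance puts the second coordinate in a single gap $h$ of $J$, $|h|\le\delta_kt$. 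While the first coordinate then crosses $I_c$ (length $c_kt$), the second coordinate stays --- by avoidance and connectedness --- in $h$, yet moves by $\int_{I_c}|\psi'|>(\delta_k/c_k)c_kt=\delta_kt\ge|h|$, a contradiction; unless $L$ terminates with the first coordinate still inside $I_c$. In that leftover case the terminal endpoint has first coordinate in $\widetilde I$, so avoidance puts the second coordinate in a gap of $J$, hence strictly inside $J$, hence --- being on $\partial(I\times J)$ --- its first coordinate is $a$ or $a+t$; a final, symmetric, application of $|\psi'|>\delta_k/c_k$, tracing $L$ inward from that vertical edge through the adjacent flush child ($I_1$ or $I_{n_k}$), drives the second coordinate out of its gap into a child of $J$, giving a point of $L$ in $\widetilde I\times\widetilde J$ --- the final contradiction. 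Hence $v=f(x,y)\in f(\widetilde I,\widetilde J)$; as $(x,y)$ was arbitrary, $f(I,J)\subseteq f(\widetilde I,\widetilde J)$.

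The genuine obstacle is making the second step airtight: keeping the monotone-graph picture rigorous (so one must justify $\partial_xf,\partial_yf\ne0$ on $I\times J$), confirming that "enter a child" and "terminate inside a child" exhaust the cases, and invoking the correct half of the hypothesis each time --- the lower bound $|\partial_yf/\partial_xf|>1-n_kc_k$ limits how far the second coordinate moves while the first crosses a gap of $I$, and the upper bound $|\partial_yf/\partial_xf|<c_k/(1-n_kc_k)$ limits how far it moves while the first crosses a child of $I$. Upgrading the single-point derivative hypothesis to all of $I\times J$ is a secondary, routine matter, valid because $I\times J$ is small in the application.
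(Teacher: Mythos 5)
Your argument is correct in outline, but it takes a genuinely different route from the paper's. The paper works in the \emph{range} of $f$: it writes $f(\widetilde I,\widetilde J)=\bigcup_{i,j}f(I_i,J_j)$, uses local monotonicity to see each $f(I_i,J_j)$ as an interval, and chains consecutive pieces by comparing $f$ at two corner points (the inequalities $f(P_1)\ge f(P_2)$ and $f(P_3)\ge f(P_4)$ of its Figure 1): the upper bound $|\partial_yf/\partial_xf|<c_k/(1-n_kc_k)$ makes the images of $I_i\times J_j$ and $I_i\times J_{j+1}$ overlap, and the lower bound $|\partial_yf/\partial_xf|>1-n_kc_k$ glues the blocks across different $i$. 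You instead work in the \emph{domain}: you fix a level $v$, take the contour $L=\{f=v\}$ through a point of $I\times J$, and show via the shadow-length estimate $x_2-x_1\le(1-(n_k-1)c_k)t<t$ and the gap-versus-slope tracing argument that $L$ cannot avoid $\widetilde I\times\widetilde J$. Your slope bounds $|\phi'|>1-n_kc_k$ and $|\psi'|>(1-n_kc_k)/c_k$ are precisely the infinitesimal versions of the paper's corner comparisons, so the two proofs use the two halves of the hypothesis in dual ways; what your version buys is an explicit, checkable case analysis for where an endpoint of $L$ can sit, where the paper compresses everything into two figure references, at the cost of a longer argument and the need to justify carefully that $L$ is a compact monotone graph. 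Both proofs share the same soft spot, which you flag honestly: the derivative bounds are hypothesised only at the single point $(x_0,y_0)$, while both arguments need them throughout $I\times J$; the paper's appeal to ``some $(\xi,\eta)$ in the neighbourhood of $(x_0,y_0)$'' is no more rigorous than your explicit upgrade, and in either case the gap is only repaired in the application by taking the basic intervals small enough that $I\times J$ lies in a neighbourhood of $(x_0,y_0)$ contained in $U$. So your proposal is sound and no worse off than the original on the one genuine weakness, but a referee would ask you to state the neighbourhood assumption as part of the lemma rather than as an aside.
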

\begin{proof}
Without loss of generality, we assume that $\partial
_{x}f|_{(x_{0},y_{0})}>0, \partial
_{y}f|_{(x_{0},y_{0})}>0$. For other cases, we may consider the new function $F(x,y)=f(x,1-y)$ or $-f(x,y)$.
By the definition of $\widetilde{I}$ and $\widetilde{J}$, we have 
$$\widetilde{I}=\cup_{i=1}^{n_k}I_{i}, \widetilde{J}=\cup_{j=1}^{n_k}J_{j}.$$
Moreover, $t=|I|=|J|=c_{1}\cdots c_{k-1}$, where $|A|$ denotes the length of $A$. 
Therefore, we have 
$$f(\widetilde{I},\widetilde{J})=\cup_{i=1}^{n_k}\cup_{j=1}^{n_k}f(I_{i},J_{j}).$$
We first prove that  for any $1\leq i\leq n_k$, $\cup_{j=1}^{n_k}f(I_{i},J_{j})$ is an interval.
By the construction of Moran set,  it suffices to prove that  $f(P_1)\geq f(P_2)$, see the second picture of Figure 1, that is, it remains to prove that 
there exists some  $(\xi,\eta)\in E_1\times E_2$ contained in the 
neighbour of $(x_0,y_0)$  such that  
$$  (c_1\cdots c_k)\partial
_{x}f(\xi,\eta)\geq (c_1c_2\cdots c_{k-1}-n_kc_1\cdots c_k) \partial
_{y}f(\xi,\eta).$$
However, this is clear due to the condition$$  \frac{\partial
_{y}f|_{(x_{0},y_{0})}}{\partial _{x}f|_{(x_{0},y_{0})}} <\inf_{k}\left\{\dfrac{c_k}{1-n_kc_k}\right\},$$ and the assumption  $\partial _{x}f, \partial _{y}f$ are continuous. 
Next, we prove that $$\cup_{i=1}^{n_k}\cup_{j=1}^{n_k}f(I_{i},J_{j})$$ is an interval. 
Analogously, we need to show that $f(P_3)\geq f(P_4)$, see the third picture of Figure 1. 
Indeed, it only remains to prove that there is some  $(\xi_1,\eta_1)\in E_1\times E_2$ which lies in the 
neighbour of $(x_0,y_0)$  such that  
$$ (c_1\cdots c_{k-1} ) \partial
_{y}f(\xi_1,\eta_1)\geq  (c_1c_2\cdots c_{k-1}-n_kc_1\cdots c_k)\partial
_{x}f(\xi_1,\eta_1).$$
However, the above inequality follows from the condition
$$
\sup_{k}\left\{1-c_kn_k\right\}<\frac{\partial
_{y}f|_{(x_{0},y_{0})}}{\partial _{x}f|_{(x_{0},y_{0})}},$$
and $\partial _{x}f, \partial _{y}f$ are continuous. 
Therefore, we have proved that $f(I,J)=f(\widetilde{I},\widetilde{J})$. 
\end{proof}
\begin{proof}[\textbf{Proof of Theorem \ref{Main}}]
Theorem  \ref{Main} follows immediately from Lemmas   \ref{key1} and \ref{key2}. 
\end{proof}
\section{Final remark}
In Lemma \ref{key1}, we note that if some basic intervals of $E_k$ intersects, then similar result as Theorem \ref{Main} can be obtained. We leave it to the readers.

  \section*{Acknowledgements}
The work is supported by National Natural Science Foundation of China (Nos.11701302,

11671147). The work is also supported by K.C. Wong Magna Fund in Ningbo University.

\end{document}